\documentclass[12pt]{article}
\usepackage{amsmath,amssymb,amsthm}
\usepackage{mathrsfs}

\usepackage{authblk}

\usepackage{hyperref}

\usepackage{tikz}
\usetikzlibrary{calc} 
\usetikzlibrary{arrows.meta,positioning}

\title{\bf Chirality and Racemization on Isotopy Classes of Quasigroups}

\author{\Large Takao Inou\'{e}}

\affil{\large Faculty of Informatics, Yamato University, \\ Osaka, Japan\footnote{Email: inoue.takao@yamato-u.ac.jp; \\ Personal Email: takaoapple@gmail.com \\ [I prefer my personal email address for correspondence.]}}

\date{February 21, 2026}

\newtheorem{theorem}{Theorem}[section]
\newtheorem{lemma}[theorem]{Lemma}
\newtheorem{corollary}[theorem]{Corollary}
\newtheorem{proposition}[theorem]{Proposition}
\newtheorem{definition}[theorem]{Definition}
\newtheorem{remark}[theorem]{Remark}
\newtheorem{assumption}[theorem]{Assumption}
\newtheorem{example}[theorem]{Example}

\usepackage{amsmath,amssymb,amsthm}

\newcommand{\Sym}{\operatorname{Sym}}
\newcommand{\Iso}{\operatorname{Iso}}
\newcommand{\Atp}{\operatorname{Atp}}
\newcommand{\Mir}{\operatorname{Mir}}

\begin{document}
\maketitle

\begin{abstract}
We develop a structural and dynamical theory of chirality for quasigroups
formulated at the level of isotopy classes.
Interpreting isotopy as a gauge symmetry of re-coordinatization and
mirror parastrophy as handedness reversal, we introduce a gauge-invariant
continuous-time two-state Markov model in which transitions occur only
between a quasigroup and its mirror.
We prove that this dynamics descends to the isotopy quotient, yielding
a reduced generator governed by a single class-dependent rate $k([Q])$.

Symmetric mirror transitions lead to convergence toward a racemic
equilibrium, whereas the vanishing condition $k([Q])=0$ characterizes
dynamical chiral stability.
By restricting admissible transitions to those generated by intrinsic
symmetries, we show that $k([Q])=0$ is equivalent to the absence of
mirror-isotopisms.
A concrete example of order $7$ demonstrates the existence of
structurally chiral quasigroup classes.
\end{abstract}

\textbf{Keywords:}
quasigroup,
isotopy,
chirality,
parastrophy,
racemization,
Markov dynamics,
structural stability.
\medskip

\textbf{MSC2020:} 20N05, 60J27.
$$ $$

\tableofcontents


\section{Introduction (Motivation and Aim)}

Chirality provides a canonical example in which two ``almost identical'' realizations of a single
abstract molecular blueprint can nevertheless exhibit markedly different biological behaviors.
A historically decisive case is thalidomide, which exists as a pair of enantiomers (often labeled
$(R)$ and $(S)$) and was originally distributed as a racemate; early discussions attributed
sedative effects mainly to one enantiomer and teratogenicity to the other.  Crucially for our
purposes, however, thalidomide enantiomers can \emph{rapidly interconvert under physiological
conditions}, so that administering a single enantiomer does not necessarily prevent the presence
of its mirror form in vivo.  In particular, the literature emphasizes fast racemization in body
fluids/tissues and the consequent tendency toward comparable concentrations of both forms over
time.  This dynamical ``loss of handedness'' is a central reason why chirality cannot always be
treated as a static label in pharmacological contexts.%
\footnote{For accessible summaries emphasizing the interconversion under biological conditions, see
e.g.\ the review discussion in \cite{Vargesson2015Thalidomide} and the ACS educational note
\cite{ACS_MOTW_Thalidomide}. See also the review on cereblon and thalidomide for a modern
overview including the rapid racemization point \cite{Yamamoto2022CRBNReview}.}

The molecular basis of thalidomide-induced teratogenicity,
including the role of cereblon and stereochemical effects,
has been elucidated through subsequent studies,
notably by Japanese researchers
\cite{Yamamoto2022CRBNReview,Itoh2010Cereblon}.

\begin{figure}[t]
  \centering
  \begin{tikzpicture}[
      node distance=28mm,
      >=Latex,
      every node/.style={font=\small},
      box/.style={draw, rounded corners, inner sep=5pt, align=center},
      arrow/.style={->, line width=0.6pt}
    ]
    \node[box] (R) {``Right-handed''\\ state $(R)$};
    \node[box, right=of R] (S) {``Left-handed''\\ state $(S)$};

    \draw[arrow] (R) to[bend left=18]
      node[above] {interconversion / racemization rate $k$} (S);
    \draw[arrow] (S) to[bend left=18]
      node[below] {interconversion / racemization rate $k$} (R);

    \node[below=16mm of $(R)!0.5!(S)$, align=center] (eq)
      {in a symmetric environment: $p_R(t),p_S(t)\to \tfrac12$ as $t\to\infty$};
  \end{tikzpicture}
  \caption{Schematic representation of chiral interconversion as a two-state
dynamical system.
The two nodes represent a pair of mirror-related states, conventionally
denoted $(R)$ and $(S)$ in chemistry.
In the case of thalidomide, these correspond to enantiomers which are known
to interconvert rapidly under physiological conditions, so that even the
administration of a single enantiomer may lead to the presence of its mirror
form in vivo.
When the surrounding environment does not distinguish left from right,
the transition rates are effectively symmetric, and the resulting dynamics
drives the system toward a racemic equilibrium with equal probabilities
$\tfrac12$ for each state.
This diagram serves as a conceptual motivation for the algebraic model
developed in this paper, where a quasigroup and its mirror parastrophe
play the role of the two chiral states, and racemization is modeled as a
gauge-invariant two-state Markov process on isotopy classes.}
 \label{fig:thalidomide_racemization_schematic}
  
\end{figure}

This paper develops an abstract structural analogue of this phenomenon in the setting of
quasigroups.  The guiding analogy is the following: a quasigroup operation on a fixed underlying
set $X$ admits a large ``coordinate freedom'' coming from isotopies, i.e.\ independent relabelings
of the left input, the right input, and the output.  We interpret isotopy as a \emph{gauge symmetry}
of representation: it changes the concrete multiplication table, but not the underlying
computational/constraint-solving nature of the quasigroup.  Separately, we fix a \emph{mirror}
operation (a chosen parastrophe, in this work the opposite operation) which plays the role of
handedness reversal.  In chemistry, a symmetric environment that does not distinguish left from
right tends to erase chirality dynamically, leading to racemic mixtures; in our algebraic model,
this corresponds to a symmetric two-state dynamics that mixes a quasigroup and its mirror.

The main aim of the present section is therefore to formalize a minimal dynamical framework in
which:
(i) the state space is the class of quasigroup structures on $X$,
(ii) isotopy acts as a gauge symmetry, and
(iii) the only ``physical'' transition allowed is a mirror transition between $Q$ and $Q^\#$.
Under a natural gauge-invariance hypothesis on the transition mechanism, we prove that the
resulting continuous-time Markov dynamics \emph{descends} to the quotient space of isotopy classes.
In particular, the induced dynamics on $\mathcal{Q}(X)/G$ becomes a genuine two-state process on
each pair $\{[Q],[Q^\#]\}$, governed by a single class-dependent rate $k([Q])$.

From the perspective of future developments, this descent theorem isolates the precise place
where ``chiral stability'' must be imposed: to \emph{preserve} a one-sided structure (the analogue
of maintaining a single enantiomer), one must enforce either vanishing mirror-transition rate
$k([Q])=0$ or a sufficiently small rate on the relevant time scale.  Establishing \emph{structural}
criteria on quasigroups guaranteeing $k([Q])=0$ (i.e.\ the absence of dynamically available mirror
moves) is the next step, and will be pursued in subsequent work.

%
%
%

\section{A Two-State Dynamical Model on Isotopy Classes}

\subsection{Quasigroups and Isotopy as Gauge Symmetry}

Let $X$ be a finite set.  
We denote by $\mathcal{Q}(X)$ the set of all quasigroup structures on $X$, i.e.\ binary operations
$\cdot : X \times X \to X$ such that $(X,\cdot)$ is a quasigroup.

Define
\[
G := \mathrm{Sym}(X) \times \mathrm{Sym}(X) \times \mathrm{Sym}(X).
\]

\begin{definition}[Isotopy action]
For $g=(\alpha,\beta,\gamma)\in G$ and $Q=(X,\cdot)\in\mathcal{Q}(X)$, we define
$g\cdot Q=(X,\ast)$ by
\[
x\ast y := \gamma\bigl(\alpha^{-1}(x)\cdot \beta^{-1}(y)\bigr).
\]
The orbit $G\cdot Q$ is called the \emph{isotopy class} of $Q$.
\end{definition}

We write $\mathcal{Q}(X)/G$ for the set of isotopy classes.
Conceptually, the group $G$ represents a ``gauge freedom'' of re-coordinatization
of inputs and outputs.

---

\subsection{Mirror Operation as Parastrophy}

\begin{definition}[Mirror parastrophe]
For $Q=(X,\cdot)\in\mathcal{Q}(X)$, define its mirror
\[
Q^{\#} := (X,\diamond), \qquad x\diamond y := y\cdot x .
\]
\end{definition}

This operation is involutive: $(Q^{\#})^{\#}=Q$.

\begin{lemma}[Compatibility of mirror and isotopy]
For any $g=(\alpha,\beta,\gamma)\in G$ and $Q\in\mathcal{Q}(X)$,
\[
(g\cdot Q)^{\#} = g^{\#}\cdot(Q^{\#}),
\]
where $g^{\#}:=(\beta,\alpha,\gamma)$.
\end{lemma}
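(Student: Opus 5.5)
The plan is a direct pointwise computation: both sides are binary operations on the same set $X$, so it suffices to show they agree on every pair $(x,y)\in X\times X$. I will unwind each side using only the definition of the isotopy action and the definition of the mirror parastrophe $Q^{\#}$, and compare.

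For the left-hand side, let $g\cdot Q=(X,\ast)$ with $x\ast y=\gamma\bigl(\alpha^{-1}(x)\cdot\beta^{-1}(y)\bigr)$. Writing $(g\cdot Q)^{\#}=(X,\star)$, the definition of the mirror gives
\[
x\star y = y\ast x = \gamma\bigl(\alpha^{-1}(y)\cdot\beta^{-1}(x)\bigr).
\]

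For the right-hand side, $Q^{\#}=(X,\diamond)$ with $u\diamond v=v\cdot u$. Applying $g^{\#}=(\beta,\alpha,\gamma)$ by the isotopy action, with first component $\beta$, second $\alpha$, third $\gamma$, yields the operation
\[
x\mathbin{\tilde\ast} y=\gamma\bigl(\beta^{-1}(x)\diamond\alpha^{-1}(y)\bigr)=\gamma\bigl(\alpha^{-1}(y)\cdot\beta^{-1}(x)\bigr),
\]
which coincides with $x\star y$ for all $x,y$. This proves the identity.

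The only point requiring care, and hence the main obstacle, is the bookkeeping of which permutation acts on which input after the swap. In $g^{\#}$ the first slot is $\beta$, so $\beta^{-1}$ is applied to the \emph{left} argument $x$ of the new operation, and the mirror then moves it to the right factor of $\cdot$. I would check this explicitly. As a side remark, one may note that $g\mapsto g^{\#}$ is an automorphism of $G$ (in particular $(gh)^{\#}=g^{\#}h^{\#}$), and that $\mathcal{Q}(X)$ is closed under both operations. These facts are not needed for the identity itself, but they are what later make the mirror map send isotopy classes to isotopy classes.
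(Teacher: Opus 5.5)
Your proposal is correct and follows essentially the same route as the paper: both sides are unwound pointwise to $\gamma\bigl(\alpha^{-1}(y)\cdot\beta^{-1}(x)\bigr)$ and compared. Your explicit check of which permutation acts on which argument after the swap is exactly the step that the paper's one-line computation compresses.
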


\begin{proof}
Let $\ast$ denote the operation of $g\cdot Q$. Then
\[
x\ast y=\gamma\bigl(\alpha^{-1}(x)\cdot \beta^{-1}(y)\bigr).
\]
Hence
\[
x\diamond y := y\ast x
= \gamma\bigl(\alpha^{-1}(y)\cdot \beta^{-1}(x)\bigr)
= \gamma\bigl(\beta^{-1}(x)\cdot^{\#} \alpha^{-1}(y)\bigr),
\]
which is precisely the operation obtained by applying $g^{\#}$ to $Q^{\#}$.
\end{proof}

\begin{corollary}
The map
\[
[Q]\ \longmapsto\ [Q^{\#}]
\]
is a well-defined involution on the quotient $\mathcal{Q}(X)/G$.
\end{corollary}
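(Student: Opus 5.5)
The plan is to deduce the corollary directly from the compatibility lemma. There are three things to check: that the assignment $[Q]\mapsto[Q^{\#}]$ does not depend on the representative $Q$, that its values lie in $\mathcal{Q}(X)/G$, and that applying it twice returns the original class.

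\emph{Well-definedness.} First I confirm that $Q^{\#}$ is again a quasigroup on $X$. In $Q^{\#}$ the equation $x\diamond a=b$ reads $a\cdot x=b$, which has a unique solution in $Q$. Likewise $a\diamond y=b$ reads $y\cdot a=b$, which also has a unique solution. So the mirror operation maps $\mathcal{Q}(X)$ into itself. Now suppose $[Q]=[Q']$, so that $Q'=g\cdot Q$ for some $g=(\alpha,\beta,\gamma)\in G$. The lemma gives $(Q')^{\#}=(g\cdot Q)^{\#}=g^{\#}\cdot Q^{\#}$. Since $g^{\#}=(\beta,\alpha,\gamma)$ is again an element of $G$, the structure $(Q')^{\#}$ lies in the orbit $G\cdot Q^{\#}$. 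Hence $[(Q')^{\#}]=[Q^{\#}]$, and the map is independent of the chosen representative.

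\emph{Involutivity.} Using $(Q^{\#})^{\#}=Q$, which was noted right after the definition of the mirror, I apply the map twice. This gives $[Q]\mapsto[Q^{\#}]\mapsto[(Q^{\#})^{\#}]=[Q]$. So the induced map squares to the identity on $\mathcal{Q}(X)/G$, and in particular it is a bijection.

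None of these steps is a real obstacle. The only point needing care is that $g^{\#}$ belongs to $G$. This holds because $G$ is the full product $\Sym(X)^3$, which is closed under swapping its first two coordinates. If the isotopy group were ever restricted, for instance to a subgroup of $G$ not invariant under this swap, then this step is exactly where the argument would fail.
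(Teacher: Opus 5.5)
Your proof is correct and follows the argument the paper has in mind: the paper states the corollary without proof, as an immediate consequence of the compatibility lemma $(g\cdot Q)^{\#}=g^{\#}\cdot Q^{\#}$ (independence of the representative) and of $(Q^{\#})^{\#}=Q$ (involutivity). Your extra checks that $Q^{\#}$ is again a quasigroup and that $g^{\#}\in G$ are precisely the details the paper leaves implicit.
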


---

\subsection{Gauge-Invariant Markov Dynamics}

\begin{definition}[Isotopy-invariant observables]
A function $f:\mathcal{Q}(X)\to\mathbb{R}$ is called \emph{isotopy invariant}
if
\[
f(g\cdot Q)=f(Q)\qquad (\forall g\in G).
\]
\end{definition}

Consider a continuous-time Markov process $(Q_t)$ on $\mathcal{Q}(X)$ with
generator
\[
(Lf)(Q)=\sum_{R\in\mathcal{Q}(X)} r(Q\to R)\bigl(f(R)-f(Q)\bigr),
\]
where $r(Q\to R)\ge0$ are transition rates.

---

\subsection{Two-State Racemization Model}

\begin{theorem}[Descent of two-state dynamics to isotopy classes]
Assume the Markov process satisfies:
\begin{enumerate}
  \item \textbf{Gauge invariance:}
  \[
  r(g\cdot Q \to g\cdot R)=r(Q\to R)\qquad(\forall g\in G).
  \]
  \item \textbf{Mirror-only transitions:}
  \[
  r(Q\to R)=0 \quad \text{unless } R\in G\cdot Q^{\#}.
  \]
  \item \textbf{Class-dependent total rate:}
  \[
  k([Q]) := \sum_{R\in G\cdot Q^{\#}} r(Q\to R)
  \]
  depends only on the isotopy class $[Q]$.
\end{enumerate}
Then the induced process on $\mathcal{Q}(X)/G$ is a well-defined
two-state Markov process with generator
\[
(\bar L\bar f)([Q])
=
k([Q])\bigl(\bar f([Q^{\#}])-\bar f([Q])\bigr),
\]
for any function $\bar f:\mathcal{Q}(X)/G\to\mathbb{R}$.
\end{theorem}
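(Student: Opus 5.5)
The natural route is Dynkin's lumpability criterion. Let $\pi:\mathcal{Q}(X)\to\mathcal{Q}(X)/G$ be the quotient map. The image $\pi(Q_t)$ is a Markov process exactly when, for every pair of classes $C,D$, the aggregated rate $\sum_{R\in D} r(Q\to R)$ takes the same value for all $Q\in C$. Equivalently, for every $\bar f$ one has $L(\bar f\circ\pi)=(\bar L\bar f)\circ\pi$ for a well-defined $\bar L$. So I will compute $L(\bar f\circ\pi)(Q)$ directly, and check two things: that it depends on $Q$ only through $[Q]$, and that it equals the claimed expression.

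\textbf{Step 1 (computation).} Set $f=\bar f\circ\pi$. By the mirror-only hypothesis (2), only states $R\in G\cdot Q^{\#}$ contribute to the generator. For each such $R$ we have $f(R)=\bar f([Q^{\#}])$, and $f(Q)=\bar f([Q])$. Summing the rates and using the definition in (3) gives
\[
L(\bar f\circ\pi)(Q)=\Bigl(\sum_{R\in G\cdot Q^{\#}} r(Q\to R)\Bigr)\bigl(\bar f([Q^{\#}])-\bar f([Q])\bigr)=k([Q])\bigl(\bar f([Q^{\#}])-\bar f([Q])\bigr).
\]

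\textbf{Step 2 (well-definedness).} The right-hand side should depend only on $[Q]$. The class $[Q^{\#}]$ depends only on $[Q]$ by the Corollary following the compatibility Lemma. The factor $k([Q])$ depends only on $[Q]$ by hypothesis (3). Hypothesis (3) can also be derived from (1) together with the Lemma, which is a useful consistency check. Replace $Q$ by $g\cdot Q$. The Lemma gives $G\cdot(g\cdot Q)^{\#}=G\cdot g^{\#}\cdot Q^{\#}=G\cdot Q^{\#}$. The map $R\mapsto g\cdot R$ is a bijection of this orbit, so by (1)
\[
\sum_{R'} r(g\cdot Q\to R')=\sum_R r(g\cdot Q\to g\cdot R)=\sum_R r(Q\to R).
\]
Hence $\bar L$ is well defined, and $L(\bar f\circ\pi)=(\bar L\bar f)\circ\pi$ holds for every $\bar f$. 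This is Dynkin's condition, with aggregated rates $k([Q])$ into $[Q^{\#}]$ and $0$ into every other class. Therefore $\pi(Q_t)$ is a continuous-time Markov chain with generator $\bar L$.

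\textbf{Step 3 (two-state structure and the degenerate case).} By the Corollary, $[Q]\mapsto[Q^{\#}]$ is an involution, so the process stays on the pair $\{[Q],[Q^{\#}]\}$ and is two-state there. The main subtlety is the case $[Q^{\#}]=[Q]$. In that case the induced generator vanishes, so the quotient process stays fixed at that class even though jumps occur upstairs. The formula still holds, and I will point this out explicitly. The only genuinely delicate step is verifying the lumpability criterion with the orbit bijection in Step 2; everything else is bookkeeping.
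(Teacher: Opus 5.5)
Your proposal is correct and follows essentially the same route as the paper: show that $L$ sends functions of the form $\bar f\circ\pi$ to functions of the same form, with the mirror-only hypothesis fixing that form as $k([Q])\bigl(\bar f([Q^{\#}])-\bar f([Q])\bigr)$. Your version spells out two steps the paper's three-line proof only asserts: the lumpability (Dynkin) criterion behind ``descends'', and the orbit-bijection argument behind ``by gauge invariance''.
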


\begin{proof}
By gauge invariance, the space of isotopy-invariant functions is preserved by $L$.
Assumptions (2) and (3) imply that $(Lf)(Q)$ depends only on the two isotopy classes
$[Q]$ and $[Q^{\#}]$.
Thus $L$ descends to a well-defined operator $\bar L$ on the quotient.
\end{proof}

---

\subsection{Symmetric Racemization and Equilibrium}

\begin{corollary}[Racemization equilibrium]
If
\[
k([Q])=k([Q^{\#}])
\]
for all isotopy classes $[Q]$, then the induced two-state process satisfies
\[
\lim_{t\to\infty}
\Pr([Q_t]=[Q])
=
\lim_{t\to\infty}
\Pr([Q_t]=[Q^{\#}])
=
\frac12.
\]
In particular, the system converges to a racemic equilibrium.
\end{corollary}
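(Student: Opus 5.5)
The plan is to reduce everything to an explicit linear ODE for the two-state process given by the descent theorem (Theorem on descent of two-state dynamics), and then solve it in closed form. Fix an isotopy class $[Q]$ and write $a:=[Q]$, $b:=[Q^{\#}]$. By the Corollary on the mirror involution, $[Q]\mapsto[Q^{\#}]$ is a well-defined involution on $\mathcal{Q}(X)/G$. So the set $\{a,b\}$ is closed under the mirror map. By the descent theorem, the reduced generator $\bar L$ only moves $a$ to $b$ (rate $k(a)$) and $b$ to $a$ (rate $k(b)$). Hence $\{a,b\}$ is a closed communicating block, and if the process starts in $\{a,b\}$ it stays there. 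I will assume, as the racemization interpretation implicitly does, that $a\neq b$ (a genuinely chiral pair) and that the common rate $k:=k(a)=k(b)$ is strictly positive. I will state explicitly that these nondegeneracy conditions are needed. If $a=b$ the two probabilities coincide and equal $1$; if $k=0$ the distribution is frozen at its initial value.

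Step one is to pass from the generator to the forward (Kolmogorov) equation. Set $p_a(t)=\Pr([Q_t]=a)$ and $p_b(t)=\Pr([Q_t]=b)$. I will apply $\frac{d}{dt}\mathbb{E}\,\bar f([Q_t])=\mathbb{E}\,(\bar L\bar f)([Q_t])$ with $\bar f=\mathbf 1_{a}$. Using the formula for $\bar L$, this gives
\[
\dot p_a=-k(a)\,p_a+k(b)\,p_b,\qquad \dot p_b=k(a)\,p_a-k(b)\,p_b .
\]
Under the hypothesis $k(a)=k(b)=k$, the sum $p_a+p_b=1$ is conserved. The difference $d(t):=p_a(t)-p_b(t)$ then satisfies $\dot d=-2k\,d$, so
\[
d(t)=d(0)\,e^{-2kt}\longrightarrow 0 .
\]
Combining $p_a+p_b=1$ with $d\to0$ gives $p_a,p_b\to\tfrac12$, which is the claim. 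Equivalently, the $2\times2$ generator matrix $\begin{pmatrix}-k&k\\ k&-k\end{pmatrix}$ has eigenvalues $0$ and $-2k$, and its kernel is spanned by the uniform vector $(\tfrac12,\tfrac12)$.

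The main obstacle is not the computation but justifying that the quotient process really satisfies the Kolmogorov equation with generator $\bar L$. Concretely, I must check that $\Pr([Q_t]=a)$ evolves autonomously. For this I will use the descent theorem in its strong form: for any isotopy-invariant $f$, $Lf$ is again isotopy invariant and equals $\bar L\bar f$ composed with the projection. Then $\mathbb{E}f(Q_t)=(e^{tL}f)(Q_0)=(e^{t\bar L}\bar f)([Q_0])$, by uniqueness of solutions of the finite linear ODE $\dot u=Lu$ on the invariant subspace. Since $\mathcal{Q}(X)$ is finite, all generators are finite matrices. Therefore no analytic issues arise, and the matrix exponential argument is rigorous. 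A final remark will record that the convergence is exponential with rate $2k([Q])$.
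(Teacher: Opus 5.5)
Your argument is correct and is the computation the paper leaves implicit: the paper states the corollary without proof, as an immediate consequence of the two-state generator $\bar L$, and your passage through the $L$-invariant subspace of isotopy-invariant functions is a sound way to make it rigorous. The nondegeneracy hypotheses you add are genuinely needed and are missing from the paper's statement: $[Q]\neq[Q^{\#}]$ fails for every commutative $Q$ (there the limit is $1$, not $\tfrac12$), $k([Q])>0$ is needed for any mixing, and you should also state explicitly that the initial law is concentrated on $\{[Q],[Q^{\#}]\}$, since $p_a+p_b$ is conserved for arbitrary rates and equals $1$ only under that assumption, not because $k(a)=k(b)$.
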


\begin{remark}
The condition $k([Q])=0$ characterizes isotopy classes whose chirality is
dynamically stable, i.e.\ no mirror transition occurs.
A structural characterization of such classes will be addressed separately.
\end{remark}

\subsection*{Bridge to Structural Chirality}

The preceding two-state reduction isolates the dynamical quantity
$k([Q])$ as the effective mirror-transition rate on the isotopy quotient.
From a structural point of view, the crucial question is therefore:

\medskip

\centerline{\emph{When does $k([Q])=0$ hold for purely algebraic reasons?}}

\medskip

In other words, under what intrinsic conditions on a quasigroup $Q$
is the mirror class $[Q^{\#}]$ dynamically inaccessible?
Since $k([Q])$ measures the total rate of transitions from $[Q]$ to
$[Q^{\#}]$, the vanishing condition $k([Q])=0$ corresponds to the
absence of admissible mirror transitions compatible with isotopy
invariance.  Thus, chirality becomes \emph{structurally stable} precisely
when no algebraically natural mechanism identifies $Q$ with its mirror
through the allowed symmetry operations.

This shifts the focus from dynamical symmetry to algebraic obstruction.
Concretely, one is led to analyze whether the mirror operation can be
realized internally via autotopisms, paratopisms, or other canonical
self-equivalences of $Q$.  If no such realization exists, then
$Q$ represents a genuinely chiral isotopy class in which mirror reversal
cannot be generated from the intrinsic symmetry structure.
The structural characterization of this obstruction forms the next stage
of the theory.

\section{Structural Vanishing of the Mirror Rate: When Does $k([Q])=0$?}

\subsection{Autotopisms and Mirror-Isotopisms}

Let $X$ be a finite set and $Q=(X,\cdot)\in\mathcal{Q}(X)$ a quasigroup.
Recall that the isotopy group is
\[
G=\Sym(X)\times\Sym(X)\times\Sym(X),
\]
acting on $\mathcal{Q}(X)$ by
\[
(\alpha,\beta,\gamma)\cdot Q \;=\; (X,\ast), \qquad
x\ast y:=\gamma\!\left(\alpha^{-1}(x)\cdot \beta^{-1}(y)\right).
\]

\begin{definition}[Autotopism group]
The \emph{autotopism group} of $Q$ is
\[
\Atp(Q):=\left\{(\alpha,\beta,\gamma)\in G \;\middle|\;
\alpha(x)\cdot \beta(y)=\gamma(x\cdot y)\ \ (\forall x,y\in X)\right\}.
\]
\end{definition}

\begin{definition}[Mirror and mirror-isotopisms]
Let $Q^{\#}=(X,\diamond)$ denote the mirror (opposite) quasigroup:
\[
x\diamond y := y\cdot x.
\]
A triple $(\alpha,\beta,\gamma)\in G$ is called a \emph{mirror-isotopism} of $Q$
if it is an isotopism $Q\to Q^{\#}$, i.e.
\[
\alpha(x)\diamond \beta(y)=\gamma(x\cdot y)\qquad(\forall x,y\in X).
\]
Equivalently, using the definition of $\diamond$, this is
\[
\beta(y)\cdot \alpha(x)=\gamma(x\cdot y)\qquad(\forall x,y\in X).
\]
We denote the set of mirror-isotopisms by
\[
\Mir(Q):=\Iso(Q,Q^{\#})
=\left\{(\alpha,\beta,\gamma)\in G\;\middle|\;
\beta(y)\cdot \alpha(x)=\gamma(x\cdot y)\right\}.
\]
\end{definition}

\begin{remark}
The condition $\Mir(Q)\neq\varnothing$ says precisely that $Q$ is isotopic to its
mirror $Q^{\#}$. In Latin-square language, this is the existence of an isotopy
between a square and its transpose/opposite.
\end{remark}

\subsection{Intrinsic Mirror Dynamics Generated by Symmetry}

To obtain a \emph{structural} criterion for $k([Q])=0$, we must specify what counts
as an ``admissible'' mirror transition. The following minimal choice captures the
intended principle: mirror transitions are only allowed when they are generated from
intrinsic symmetries of $Q$.

\begin{definition}[Symmetry-generated mirror transition mechanism]
Fix a nonnegative weight function $w:G\to[0,\infty)$ satisfying $w(hgh^{-1})=w(g)$
for all $g,h\in G$ (a class function on $G$).
For each $Q\in\mathcal{Q}(X)$ define transition rates by
\[
r(Q\to R)
:=
\sum_{g\in \Mir(Q)}
w(g)\cdot \mathbf{1}_{\{R=g\cdot Q^{\#}\}},
\]
where $\mathbf{1}_{\{\cdots\}}$ is the indicator function.
In words: from $Q$ we may jump only to isotopes of $Q^{\#}$ obtained via mirror-isotopisms
of $Q$, and the rate is determined solely by the conjugacy-invariant weight $w$.
\end{definition}

\begin{lemma}[Gauge invariance and two-state property]
The above transition mechanism satisfies:
\begin{enumerate}
\item $r(h\cdot Q \to h\cdot R)=r(Q\to R)$ for all $h\in G$ (gauge invariance);
\item $r(Q\to R)=0$ unless $R\in G\cdot Q^{\#}$ (mirror-only transitions).
\end{enumerate}
\end{lemma}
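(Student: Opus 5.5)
Part (2) is the easy half, and I would prove it first. If $r(Q\to R)\neq 0$, then some term in the defining sum is nonzero. So there is $g\in\Mir(Q)$ with $R=g\cdot Q^{\#}$, hence $R\in G\cdot Q^{\#}$. Contrapositively, $r(Q\to R)=0$ whenever $R\notin G\cdot Q^{\#}$. No property of $w$ is needed for this.

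For part (1) the plan is a change of summation variable. The first step is to express $\Mir(h\cdot Q)$ in terms of $\Mir(Q)$. Unwinding the definition of the action shows that $g\in\Mir(Q)$ exactly when $g\cdot Q=Q^{\#}$ as operations. Indeed, $\alpha(x)\diamond\beta(y)=\gamma(x\cdot y)$ rewrites as $x\diamond y=\gamma(\alpha^{-1}(x)\cdot\beta^{-1}(y))$. The compatibility lemma $(h\cdot Q)^{\#}=h^{\#}\cdot Q^{\#}$ then gives
\[
g'\in\Mir(h\cdot Q)\iff g'h\cdot Q=h^{\#}\cdot Q^{\#}\iff (h^{\#})^{-1}g'h\in\Mir(Q).
\]
Hence $g\mapsto g':=h^{\#}gh^{-1}$ is a bijection $\Mir(Q)\to\Mir(h\cdot Q)$.

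Next I would substitute this bijection into
\[
r(h\cdot Q\to h\cdot R)=\sum_{g'\in\Mir(h\cdot Q)}w(g')\,\mathbf 1_{\{h\cdot R=g'\cdot(h\cdot Q)^{\#}\}}.
\]
This turns each summand into $w(h^{\#}gh^{-1})\,\mathbf 1_{\{h\cdot R=h^{\#}g(h^{-1}h^{\#})\cdot Q^{\#}\}}$. The goal is to match it term by term with $w(g)\,\mathbf 1_{\{R=g\cdot Q^{\#}\}}$. When $h^{\#}=h$, i.e.\ $\alpha=\beta$, both factors match at once. The weight becomes $w(hgh^{-1})=w(g)$ by the class-function hypothesis. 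The indicator becomes $\mathbf 1_{\{h\cdot R=hg\cdot Q^{\#}\}}=\mathbf 1_{\{R=g\cdot Q^{\#}\}}$ because the action is free on equalities.

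The main obstacle is the general case $h^{\#}\neq h$, where the transport is a twisted conjugation rather than a conjugation. The class-function property of $w$ only gives $w(h^{\#}gh^{-1})=w(h^{-1}h^{\#}g)$. The indicator likewise carries the extra factor $h^{-1}h^{\#}$ acting on $Q^{\#}$.

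To close this gap I would first try to show that both extra factors are absorbed simultaneously after reindexing. Right-composition of $\Mir(Q)$ with $\Atp(Q^{\#})$ preserves $\Mir(Q)$. So I would try to reindex by $g\mapsto g\,(h^{-1}h^{\#})^{\pm1}$ inside $\Mir(Q)$ and compare $w$-values along these cosets. If that reindexing fails for general $h$, I would look for a counterexample: a $Q$ and an $h$ with $\alpha\neq\beta$ for which the two sums differ. Such an example would show that part (1) as worded needs an additional hypothesis on $w$ or on $h$.
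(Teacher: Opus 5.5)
Part (2) is correct and matches the paper's argument. For part (1), your transport formula $\Mir(h\cdot Q)=h^{\#}\,\Mir(Q)\,h^{-1}$ is right, and it exposes an error in the paper's proof. The paper asserts $\Mir(h\cdot Q)=h\,\Mir(Q)\,h^{-1}$ and then uses $(hgh^{-1})\cdot(h\cdot Q^{\#})=h\cdot(g\cdot Q^{\#})$. This silently replaces $(h\cdot Q)^{\#}$ by $h\cdot Q^{\#}$, whereas the compatibility lemma gives $(h\cdot Q)^{\#}=h^{\#}\cdot Q^{\#}$. So the paper's argument is valid only in your special case $\alpha=\beta$.

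The gap in your proposal is that the case $h^{\#}\neq h$ is left open, and the reindexing you suggest cannot close it. With $k:=h^{-1}h^{\#}=(\alpha^{-1}\beta,\beta^{-1}\alpha,\mathrm{id})$, your substitution gives
\[
r(h\cdot Q\to h\cdot R)=\sum_{g\in\Mir(Q)} w(kg)\,\mathbf{1}_{\{R=kgk\cdot Q^{\#}\}},
\]
so the defect acts on both sides of $g$. But $\Mir(Q)$ is stable only under $g\mapsto ag$ with $a\in\Atp(Q^{\#})$ and $g\mapsto gb$ with $b\in\Atp(Q)$, and $k$ typically lies in neither group.

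Your fallback is the correct outcome: part (1) is false as stated. Take $X=\mathbb{Z}_3$, $Q=(X,+)$, $w\equiv 1$ (trivially a class function) and $h=(\mathrm{id},\sigma,\mathrm{id})$ with $\sigma(y)=-y$, so that $k=(-\mathrm{id},-\mathrm{id},\mathrm{id})$.

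Since $Q$ is commutative, $\Mir(Q)=\Atp(Q)$ consists of the $18$ triples $g=(\phi x+p,\ \phi y+q,\ \phi z+p+q)$ with $\phi=\pm1$ and $p,q\in\mathbb{Z}_3$. Each satisfies $g\cdot Q^{\#}=Q$, hence $r(Q\to Q)=18$ and $r(Q\to R)=0$ for $R\neq Q$.

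On the other hand, $kgk=(\phi x-p,\ \phi y-q,\ \phi z+p+q)$, and $kgk\cdot Q$ is the operation $x+y+2(p+q)$. Therefore $r(h\cdot Q\to h\cdot Q)=6\neq 18$. The remaining $12$ units of rate go to $h\cdot R$ for $R$ given by $x+y+1$ and $x+y+2$, where $r(Q\to R)=0$.

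You can also check this directly. Here $h\cdot Q$ is $x-y$ and its mirror is $y-x$. A triple $(a,b,c)$ that is simultaneously an isotopism $h\cdot Q\to(h\cdot Q)^{\#}$ and $(h\cdot Q)^{\#}\to h\cdot Q$ must have $c$ odd. So $c=\pm\mathrm{id}$ and $a=b=t-c$ with $t\in\mathbb{Z}_3$, giving six triples.

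What your argument does establish is invariance under the subgroup $\{(\alpha,\alpha,\gamma)\}$ of gauge elements commuting with the mirror. Invariance under all of $G$ fails, and with it the gauge-invariance hypothesis that the descent theorem needs for this mechanism.
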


\begin{proof}
(2) is immediate from the definition since $R=g\cdot Q^{\#}\in G\cdot Q^{\#}$.

For (1), note that conjugation covariance of $w$ and the natural transport of mirror-isotopisms
under isotopy imply $\Mir(h\cdot Q)=h\,\Mir(Q)\,h^{-1}$ and $(hgh^{-1})\cdot (h\cdot Q^{\#})=h\cdot(g\cdot Q^{\#})$.
Substituting into the definition of $r$ yields the claim.
\end{proof}

\subsection{Structural Criterion for $k([Q])=0$}

Under the symmetry-generated mechanism, the effective mirror rate is explicitly controlled
by $\Mir(Q)$.


\begin{theorem}[Structural characterization of \texorpdfstring{$k([Q])=0$}{k([Q])=0}]
Assume the symmetry-generated mirror transition mechanism above, and let
$k([Q])$ be the induced mirror-transition rate on isotopy classes as in
Theorem~(Descent of two-state dynamics to isotopy classes).
Then the following are equivalent:
\begin{enumerate}
\item $k([Q])=0$;
\item $\Mir(Q)=\varnothing$ (no mirror-isotopism $Q\to Q^{\#}$ exists);
\item $[Q]\neq[Q^{\#}]$ and the mirror class is dynamically inaccessible by intrinsic symmetry.
\end{enumerate}
In particular, under this class of admissible dynamics, \emph{dynamical chiral stability}
($k([Q])=0$) is equivalent to the purely algebraic obstruction $\Mir(Q)=\varnothing$.
\end{theorem}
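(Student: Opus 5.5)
The plan is to compute $k([Q])$ explicitly from the definition of the symmetry-generated mechanism. Then I would compare the vanishing of $k([Q])$ with the emptiness of $\Mir(Q)$ and with the Remark identifying $\Mir(Q)\neq\varnothing$ with $[Q]=[Q^{\#}]$.

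\textbf{Step 1 (the rate formula).} By the Lemma (gauge invariance and two-state property), the mechanism satisfies hypotheses (1) and (2) of the descent theorem, so $k([Q])=\sum_{R\in G\cdot Q^{\#}} r(Q\to R)$ is the rate to use. Substituting the definition of $r$ and interchanging the two finite sums gives
\[
k([Q])=\sum_{R\in G\cdot Q^{\#}}\ \sum_{g\in\Mir(Q)} w(g)\,\mathbf{1}_{\{R=g\cdot Q^{\#}\}}
=\sum_{g\in\Mir(Q)} w(g).
\]
The second equality holds because, for each $g$, exactly one $R$ in the orbit equals $g\cdot Q^{\#}$.

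For hypothesis (3), that $k$ is class-dependent, I would check that $\Mir(h\cdot Q)$ is in weight-preserving bijection with $\Mir(Q)$. The transport of mirror-isotopisms under $h=(\alpha,\beta,\gamma)$ should be $g\mapsto h^{\#}gh^{-1}$, using the compatibility lemma $(h\cdot Q)^{\#}=h^{\#}\cdot Q^{\#}$. This is not literally a conjugation, so I expect this to be the main obstacle. I would first try to invoke the earlier Lemma, which asserts $\Mir(h\cdot Q)=h\,\Mir(Q)\,h^{-1}$. If that step resists, I would impose the weaker requirement $w(h^{\#}gh^{-1})=w(g)$, or simply work at a fixed representative $Q$, since the statement is pointwise in $Q$.

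\textbf{Step 2 ((1)$\Leftrightarrow$(2)).} If $\Mir(Q)=\varnothing$, the sum in Step 1 is empty, so $k([Q])=0$. Conversely, if $k([Q])=0$, then every term $w(g)$ with $g\in\Mir(Q)$ vanishes, since all terms are nonnegative. Concluding $\Mir(Q)=\varnothing$ therefore requires $w>0$ on $G$, or at least on mirror-isotopisms. This is implicit in ``admissible'' dynamics, and I would state it explicitly as a nondegeneracy assumption on $w$. Without it the implication fails: for example, $w\equiv 0$ gives $k\equiv 0$ for every class.

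\textbf{Step 3 ((2)$\Leftrightarrow$(3)).} By the Remark, $\Mir(Q)=\varnothing$ holds exactly when $Q$ is not isotopic to $Q^{\#}$, that is, when $[Q]\neq[Q^{\#}]$. The clause ``dynamically inaccessible by intrinsic symmetry'' means that no transition generated by some $g\in\Mir(Q)$ exists. Given (2), this holds vacuously. Conversely, (3) contains $[Q]\neq[Q^{\#}]$, and that alone yields (2) via the Remark. The final ``in particular'' sentence is then just (1)$\Leftrightarrow$(2) restated.
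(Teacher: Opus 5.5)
Your route is the paper's. It too collapses the indicator sum to $k([Q])=\sum_{g\in\Mir(Q)}w(g)$ and reads (3) as a restatement of $[Q]\neq[Q^{\#}]$. However, the paper then concludes $k([Q])=0\Rightarrow\Mir(Q)=\varnothing$ from $w\ge 0$ alone, which does not follow. So your nondegeneracy hypothesis ($w>0$ on $\Mir(Q)$) is a necessary correction, not an optional extra. With $w\equiv 0$, any commutative $Q$ (for which $\mathrm{id}\in\Mir(Q)$) violates the stated equivalence, and the paper itself has to assume $w(\mathrm{id})>0$ in its proposition on commutative quasigroups.

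Your transport formula $\Mir(h\cdot Q)=h^{\#}\Mir(Q)h^{-1}$ is also correct, and the Lemma's identity $\Mir(h\cdot Q)=h\,\Mir(Q)\,h^{-1}$ is false. To see this, take $Q$ commutative and $h=(\alpha,\mathrm{id},\mathrm{id})$ with $h\cdot Q$ non-commutative: then $\mathrm{id}$ lies in the right-hand side but not in the left. So do not cite that Lemma. Your fallback of working at a fixed representative suffices, because $|\Mir(h\cdot Q)|=|\Mir(Q)|$ makes the vanishing condition class-invariant once $w>0$. One small wording point: $w(h^{\#}gh^{-1})=w(g)$ is a replacement for conjugation invariance, not a weakening of it, since neither condition implies the other.
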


\begin{proof}
By definition,
\[
k([Q])=\sum_{R\in G\cdot Q^{\#}} r(Q\to R)
=\sum_{R\in G\cdot Q^{\#}} \ \sum_{g\in\Mir(Q)} w(g)\,\mathbf{1}_{\{R=g\cdot Q^{\#}\}}
=\sum_{g\in\Mir(Q)} w(g).
\]
Since $w(g)\ge 0$ for all $g$, we have $k([Q])=0$ iff $\Mir(Q)=\varnothing$ or
equivalently iff every admissible mirror move is absent.
The remaining statements are restatements in quotient-language.
\end{proof}

\begin{definition}[Structurally chiral isotopy class]
An isotopy class $[Q]\in\mathcal{Q}(X)/G$ is called \emph{structurally chiral}
(with respect to the chosen mirror $\#$) if $\Mir(Q)=\varnothing$.
\end{definition}

\begin{remark}
The definition depends on the chosen mirror parastrophe. One may replace $Q^{\#}$
by any fixed parastrophe $Q^{\pi}$; the same constructions and equivalences hold,
with $\Mir(Q)$ replaced by the corresponding set of $\pi$-isotopisms.
\end{remark}

\subsection{Restricting Admissible Mirror Transitions to Intrinsic Symmetry}

The descent theorem established above is formulated for a general
gauge-invariant two-state dynamics. In order to obtain \emph{structural}
criteria for the vanishing condition $k([Q])=0$, we now restrict the class
of admissible transition mechanisms. The guiding principle is that mirror
transitions should not be chosen externally, but must be \emph{generated from
intrinsic symmetries} of the quasigroup itself.

\begin{definition}[Intrinsic symmetry]
Let $Q=(X,\cdot)\in\mathcal{Q}(X)$.  An \emph{intrinsic symmetry} of $Q$ means
an element of the autotopism group
\[
\Atp(Q):=\{(\alpha,\beta,\gamma)\in \Sym(X)^3 \mid
\alpha(x)\cdot \beta(y)=\gamma(x\cdot y)\ \ (\forall x,y\in X)\}.
\]
\end{definition}

\begin{definition}[Mirror-isotopisms]
Fix the mirror (opposite) parastrophe $Q^\#=(X,\diamond)$ given by
$x\diamond y:=y\cdot x$.
A triple $g=(\alpha,\beta,\gamma)\in G:=\Sym(X)^3$ is called a
\emph{mirror-isotopism} of $Q$ if it is an isotopism $Q\to Q^\#$, i.e.
\[
\alpha(x)\diamond \beta(y)=\gamma(x\cdot y)\qquad(\forall x,y\in X).
\]
Equivalently,
\[
\beta(y)\cdot \alpha(x)=\gamma(x\cdot y)\qquad(\forall x,y\in X).
\]
We denote the set of all mirror-isotopisms by $\Mir(Q):=\Iso(Q,Q^\#)$.
\end{definition}

\begin{assumption}[Symmetry-generated admissible transitions]\label{ass:symgen}
A mirror transition from $Q$ is admissible only if it is obtained from a
mirror-isotopism of $Q$.  More precisely, we fix a nonnegative weight function
$w:G\to[0,\infty)$ that is invariant under conjugation in $G$,
\[
w(hgh^{-1})=w(g)\qquad(\forall g,h\in G),
\]
and define transition rates by
\[
r(Q\to R)
:=
\sum_{g\in \Mir(Q)} w(g)\cdot \mathbf{1}_{\{\,R=g\cdot Q^\#\,\}}.
\]
Thus $Q$ may jump only to those isotopes of $Q^\#$ that arise canonically from
mirror-isotopisms of $Q$; no other mirror moves are permitted.
\end{assumption}

\begin{remark}
Assumption~\ref{ass:symgen} enforces two features automatically:
(i) \emph{mirror-only transitions} ($r(Q\to R)=0$ unless $R\in G\cdot Q^\#$),
and (ii) \emph{gauge invariance} ($r(h\cdot Q\to h\cdot R)=r(Q\to R)$),
the latter following from the conjugacy invariance of $w$ and the natural
transport of isotopisms under the $G$-action.
\end{remark}

\subsection{Structural Characterization of $k([Q])=0$}

Under Assumption~\ref{ass:symgen}, the mirror-transition mechanism
is generated solely by intrinsic mirror-isotopisms of $Q$.
We now show that the effective mirror rate on isotopy classes
is determined entirely by the algebraic structure of $\Mir(Q)$.

\begin{lemma}[Gauge invariance of symmetry-generated transitions]
Under Assumption~\ref{ass:symgen}, the transition rates satisfy
\[
r(h\cdot Q \to h\cdot R)=r(Q\to R)
\qquad(\forall Q,R\in\mathcal{Q}(X),\ \forall h\in G).
\]
\end{lemma}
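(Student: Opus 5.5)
The plan is to unfold the definition of $r$ on both sides and match the two sums term by term through an explicit bijection between $\Mir(h\cdot Q)$ and $\Mir(Q)$. First, I would fix the convention that $g\in\Mir(Q)$ means $g\cdot Q=Q^{\#}$. This holds because the defining identity $\alpha(x)\diamond\beta(y)=\gamma(x\cdot y)$ says exactly that the action of $g=(\alpha,\beta,\gamma)$ carries $Q$ to $Q^{\#}$. Next, using the Lemma (Compatibility of mirror and isotopy), $(h\cdot Q)^{\#}=h^{\#}\cdot Q^{\#}$. So $g\in\Mir(h\cdot Q)$ iff $g h\cdot Q=h^{\#}\cdot Q^{\#}$, iff $(h^{\#})^{-1}gh\in\Mir(Q)$. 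Hence
\[
\Mir(h\cdot Q)=h^{\#}\,\Mir(Q)\,h^{-1},
\]
and $m\mapsto h^{\#}mh^{-1}$ is the required bijection $\Mir(Q)\to\Mir(h\cdot Q)$.

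Substituting this into
\[
r(h\cdot Q\to h\cdot R)=\sum_{g\in\Mir(h\cdot Q)}w(g)\,\mathbf 1_{\{h\cdot R=g\cdot (h\cdot Q)^{\#}\}},
\]
the summand indexed by $g=h^{\#}mh^{-1}$ becomes $w(h^{\#}mh^{-1})\,\mathbf 1_{\{h\cdot R=h^{\#}mh^{-1}h^{\#}\cdot Q^{\#}\}}$. To finish, I would show that each such term equals $w(m)\,\mathbf 1_{\{R=m\cdot Q^{\#}\}}$, and then sum over $m\in\Mir(Q)$.

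The main obstacle is exactly this last identification. The transport is a \emph{twisted} conjugation $m\mapsto h^{\#}mh^{-1}$, not the ordinary conjugation $hmh^{-1}$ that the earlier lemma asserts. Two things therefore have to be checked: whether $w(h^{\#}mh^{-1})=w(m)$ follows from conjugation invariance of $w$, and whether $h\cdot R=h^{\#}mh^{-1}h^{\#}\cdot Q^{\#}$ is equivalent to $R=m\cdot Q^{\#}$.

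I would first dispose of the easy case where $h$ is symmetric, $h=h^{\#}$ (that is, $\alpha=\beta$). There the map is an honest conjugation, and $hmh^{-1}h\cdot Q^{\#}=h\cdot(m\cdot Q^{\#})$, so both requirements hold immediately. For general $h$ I would write $h^{\#}=\sigma(h)$, where $\sigma$ is the coordinate-swap automorphism of $G$, and try to re-express both conditions using the fact that $\sigma$ is an involutive automorphism. If the identity does not close up, I would record the minimal repair. One option is to require $w$ to be invariant under the twisted action $m\mapsto\sigma(h)mh^{-1}$. Another is to interpret the target of a mirror move $m$ as the gauge-covariant point $m\cdot Q$ (which equals $Q^{\#}$) together with its transport. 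Either way the term-by-term equality, and hence the lemma, follows by the same substitution.
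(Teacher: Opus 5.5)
Your transport formula $\Mir(h\cdot Q)=h^{\#}\,\Mir(Q)\,h^{-1}$ is correct, and it locates exactly where the paper's own proof fails. The paper asserts $\Mir(h\cdot Q)=h\,\Mir(Q)\,h^{-1}$. It then uses $(hgh^{-1})\cdot(h\cdot Q^{\#})=h\cdot(g\cdot Q^{\#})$, which is true but beside the point: the definition of $r(h\cdot Q\to\cdot)$ involves $(h\cdot Q)^{\#}=h^{\#}\cdot Q^{\#}$, not $h\cdot Q^{\#}$. Your case $\alpha=\beta$ also goes through.

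The gap is the general case, and it cannot be closed, because the lemma is false as stated. Take $X=\mathbb{Z}_4$, $Q=(\mathbb{Z}_4,+)$, $w\equiv 1$, and $h=(\alpha,\mathrm{id},\mathrm{id})$ with $\alpha=(0\,1\,2)$.
\begin{itemize}
\item Since $Q$ is commutative, $\Mir(Q)=\Atp(Q)$, and every term of $r(Q\to\cdot)$ has target $m\cdot Q^{\#}=Q$. Hence $r(Q\to R)=0$ for all $R\neq Q$.
\item The element $g:=h^{\#}h^{-1}=(\alpha^{-1},\alpha,\mathrm{id})$ lies in $\Mir(h\cdot Q)$. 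Its target is $g\cdot(h\cdot Q)^{\#}=g^{2}h\cdot Q$, with operation $\alpha(x)+\alpha(y)$.
\item By contrast, $h\cdot Q$ has operation $\alpha^{-1}(x)+y$. At $(x,y)=(3,0)$ the two operations give $0$ and $3$.
\end{itemize}
So $R:=h^{-1}g^{2}h\cdot Q\neq Q$, and $r(h\cdot Q\to h\cdot R)\ge 1>0=r(Q\to R)$.

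Consequently your closing claim, that ``either way'' the lemma follows, is wrong.
\begin{itemize}
\item Requiring $w$ to be invariant under $m\mapsto h^{\#}mh^{-1}$ only fixes the weights. The weight $w\equiv 1$ already has that invariance, and the example fails through the indicator alone.
\item Taking the target of a move to be $m\cdot Q=Q^{\#}$ fails in the same example. Then $r(Q\to Q)=|\Atp(Q)|>0$, but $r(h\cdot Q\to h\cdot Q)=0$, because $h\cdot Q$ is not commutative and so $h\cdot Q\neq(h\cdot Q)^{\#}$.
\end{itemize}
What the two modifications together do give is the twisted identity $r(h\cdot Q\to h^{\#}\cdot R)=r(Q\to R)$. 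This is the correct form of gauge covariance here, because $Q\mapsto Q^{\#}$ intertwines the $G$-action with $h\mapsto h^{\#}$ rather than commuting with it. It is still compatible with passing to isotopy classes, since $G$-orbits are preserved. But it is a corrected lemma and should be stated as one, not offered as a proof of the given statement.

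The twisted invariance of $w$ is genuinely needed even for $k$ to be a class invariant. The weight $w=\mathbf{1}_{\{\mathrm{id}\}}$ is conjugation-invariant, yet in the example above it gives $k(Q)=1$ and $k(h\cdot Q)=0$.
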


\begin{proof}
Let $h\in G$.
Mirror-isotopisms transport naturally under conjugation:
\[
\Mir(h\cdot Q)=h\,\Mir(Q)\,h^{-1}.
\]
Since $w$ is conjugation-invariant,
\[
w(hgh^{-1})=w(g).
\]
Moreover,
\[
(hgh^{-1})\cdot (h\cdot Q^\#)=h\cdot(g\cdot Q^\#).
\]
Substituting into the definition of $r$ yields
\[
r(h\cdot Q \to h\cdot R)
=
r(Q\to R).
\]
\end{proof}

\begin{proposition}[Explicit formula for the mirror rate]
Under Assumption~\ref{ass:symgen}, the effective mirror rate on isotopy classes
is given by
\[
k([Q])
=
\sum_{g\in\Mir(Q)} w(g).
\]
\end{proposition}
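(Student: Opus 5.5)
The plan is to compute $k([Q])$ directly from its definition in the descent theorem, $k([Q])=\sum_{R\in G\cdot Q^\#} r(Q\to R)$, by substituting the rates of Assumption~\ref{ass:symgen} and exchanging the two finite sums. Since $X$ is finite, $G$ and $\mathcal{Q}(X)$ are finite, so all sums are finite and the interchange is unproblematic. The order of steps is: (i) insert the defining formula for $r(Q\to R)$; (ii) swap $\sum_{R}$ and $\sum_{g\in\Mir(Q)}$; (iii) evaluate the inner indicator sum; (iv) check that the result depends only on the class $[Q]$, so that the class-dependence hypothesis of the descent theorem is satisfied and $k([Q])$ is well defined.

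For step (iii), fix $g\in\Mir(Q)$. The element $R_g:=g\cdot Q^\#$ lies in the orbit $G\cdot Q^\#$, because $g\in G$. Hence the indicator $\mathbf{1}_{\{R=g\cdot Q^\#\}}$ is nonzero for exactly one $R$ in $G\cdot Q^\#$, namely $R=R_g$. It follows that
\[
\sum_{R\in G\cdot Q^\#}\mathbf{1}_{\{R=g\cdot Q^\#\}}=1 .
\]
This gives $k([Q])=\sum_{g\in\Mir(Q)}w(g)$ at the level of the representative $Q$. Different $g$ may produce the same $R$, but this causes no difficulty: the rates are defined as a sum over $g$, so such collisions are already counted with multiplicity.

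The main point to verify is step (iv): the right-hand side must not depend on the representative of $[Q]$. I would use the transport identity $\Mir(h\cdot Q)=h\,\Mir(Q)\,h^{-1}$ from the proof of the gauge-invariance lemma above. Then $\sum_{g'\in\Mir(h\cdot Q)}w(g')=\sum_{g\in\Mir(Q)}w(hgh^{-1})=\sum_{g\in\Mir(Q)}w(g)$, because $g\mapsto hgh^{-1}$ is a bijection and $w$ is a class function.

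If that transport identity needs checking, the most natural form is a two-sided translate. By the mirror-compatibility lemma, $(h\cdot Q)^\#=h^\#\cdot Q^\#$, and isotopisms compose, so one gets $\Mir(h\cdot Q)=h^{\#}\,\Mir(Q)\,h^{-1}$, not a conjugate. In that case I would fall back on the cleaner argument: $k$ is computed at the representative $Q$, and the gauge-invariance lemma already shows $r(h\cdot Q\to h\cdot R)=r(Q\to R)$. Summing this over $R\in G\cdot Q^\#$, and noting that $h\cdot(G\cdot Q^\#)=G\cdot(h\cdot Q)^\#$, gives $k$ at $h\cdot Q$ equal to $k$ at $Q$. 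This relies on that lemma as stated earlier in the paper, which the proposal is entitled to assume.
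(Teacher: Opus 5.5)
Your steps (i)--(iii) are exactly the paper's proof. You substitute the rates, interchange the finite sums, and note that each $g\in\Mir(Q)$ contributes $w(g)$ exactly once, because $g\cdot Q^{\#}\in G\cdot Q^{\#}$. Your remark that collisions are counted with multiplicity is the right reading of ``each $g$ contributes exactly one term''. This correctly gives $\sum_{R\in G\cdot Q^{\#}}r(Q\to R)=\sum_{g\in\Mir(Q)}w(g)$ for the chosen representative $Q$.

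The paper stops there and takes for granted that this quantity depends only on $[Q]$, which is hypothesis (3) of the descent theorem. Your step (iv) tries to supply that, and your diagnosis is correct. Since $\Mir(Q)=\{g\in G: g\cdot Q=Q^{\#}\}$ and $(h\cdot Q)^{\#}=h^{\#}\cdot Q^{\#}$, one has $\Mir(h\cdot Q)=h^{\#}\,\Mir(Q)\,h^{-1}$. So the conjugation identity used in the paper's gauge-invariance lemma is wrong.

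The genuine gap is your fallback. Having refuted that identity, you cannot then invoke the gauge-invariance lemma, because the paper's only proof of it is that identity. In fact both the lemma and the class-independence you want are false. Take $w=\mathbf{1}_{\{e\}}$ with $e=(\mathrm{id},\mathrm{id},\mathrm{id})$, which is a nonnegative class function. Take $Q=(\mathbb{Z}_3,+)$ and $h=(\alpha,\mathrm{id},\mathrm{id})$ with $\alpha(x)=-x$.
\begin{itemize}
\item Since $Q^{\#}=Q$, we have $e\in\Mir(Q)$ and $\sum_{g\in\Mir(Q)}w(g)=1$.
\item The isotope $h\cdot Q$ has operation $x\ast y=y-x$, which is not commutative. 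Hence $e\notin\Mir(h\cdot Q)$, and the sum is $0$ at this representative of the same class.
\item The same data give $r(Q\to Q)=1\neq 0=r(h\cdot Q\to h\cdot Q)$, which refutes the lemma.
\end{itemize}

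So step (iv) cannot be completed under Assumption~\ref{ass:symgen} alone. The formula holds as a computation at a chosen representative. It defines a function of $[Q]$ only under an extra hypothesis on $w$, for instance $w$ constant. More generally, $w(h^{\#}gh^{-1})=w(g)$ for all $g,h\in G$ suffices: then your bijection $g\mapsto h^{\#}gh^{-1}$ from $\Mir(Q)$ onto $\Mir(h\cdot Q)$ finishes the argument. Without such a hypothesis, the only isotopy invariant here is $|\Mir(Q)|$, in particular whether $\Mir(Q)=\varnothing$.
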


\begin{proof}
By definition of $k([Q])$ in the descent theorem,
\[
k([Q])
=
\sum_{R\in G\cdot Q^\#} r(Q\to R).
\]
Substituting the definition of $r$ gives
\[
k([Q])
=
\sum_{R\in G\cdot Q^\#}
\sum_{g\in\Mir(Q)}
w(g)\,\mathbf{1}_{\{R=g\cdot Q^\#\}}.
\]
Since each $g\in\Mir(Q)$ contributes exactly one term,
this reduces to
\[
k([Q])
=
\sum_{g\in\Mir(Q)} w(g).
\]
\end{proof}

\begin{theorem}[Structural vanishing criterion]
Under Assumption~\ref{ass:symgen}, the following are equivalent:
\begin{enumerate}
\item $k([Q])=0$;
\item $\Mir(Q)=\varnothing$;
\item there exists no isotopism from $Q$ to its mirror $Q^\#$.
\end{enumerate}
\end{theorem}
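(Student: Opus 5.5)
The plan is to route everything through the Proposition (Explicit formula for the mirror rate), which under Assumption~\ref{ass:symgen} gives $k([Q])=\sum_{g\in\Mir(Q)} w(g)$. I will prove the equivalences as $(2)\Leftrightarrow(3)$, $(2)\Rightarrow(1)$ and $(1)\Rightarrow(2)$.

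The step $(2)\Leftrightarrow(3)$ is purely definitional. By the Definition (Mirror-isotopisms), $\Mir(Q)=\Iso(Q,Q^\#)$ is exactly the set of triples $(\alpha,\beta,\gamma)\in G$ with $\beta(y)\cdot\alpha(x)=\gamma(x\cdot y)$, that is, the set of isotopisms $Q\to Q^\#$. So $\Mir(Q)=\varnothing$ says verbatim that no isotopism from $Q$ to $Q^\#$ exists.

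For $(2)\Rightarrow(1)$, I apply the explicit formula. If $\Mir(Q)=\varnothing$, the sum is empty, so $k([Q])=0$. This is consistent with the descent framework: since $\Mir(h\cdot Q)=h\,\Mir(Q)\,h^{-1}$ (established in the proof of the Lemma on gauge invariance of symmetry-generated transitions), emptiness of $\Mir(Q)$ depends only on $[Q]$. I will also recheck one point in the explicit formula: when several $g\in\Mir(Q)$ produce the same isotope $R=g\cdot Q^\#$, their weights add inside $r(Q\to R)$. Interchanging the two finite sums still credits each $g$ exactly once, so no term is lost or double-counted.

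The direction $(1)\Rightarrow(2)$ is the main obstacle, because it does not follow from Assumption~\ref{ass:symgen} as literally stated. The assumption only requires $w\ge 0$, and for $w\equiv 0$ we get $k\equiv 0$ even when $\Mir(Q)\neq\varnothing$, for example for a commutative $Q$, where $(\mathrm{id},\mathrm{id},\mathrm{id})\in\Mir(Q)$. So I will make explicit the nondegeneracy hypothesis the statement implicitly needs: $w(g)>0$ for all $g\in G$, or at least $w>0$ on $\Mir(Q)$. A natural choice is a strictly positive class function, such as a constant. With this hypothesis, $k([Q])=0$ is a vanishing sum of nonnegative terms, so every term is zero. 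Since each term $w(g)$ with $g\in\Mir(Q)$ is strictly positive, there can be no such $g$, and hence $\Mir(Q)=\varnothing$.

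I would state this positivity requirement in a remark accompanying the theorem. I would also note that the same caveat applies to the earlier Theorem (Structural characterization of $k([Q])=0$), whose proof uses the same step.
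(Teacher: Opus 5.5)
Your argument is the paper's own proof: the explicit formula $k([Q])=\sum_{g\in\Mir(Q)}w(g)$, then a sign argument, with $(2)\Leftrightarrow(3)$ holding by definition. Your caveat is also correct. The paper's step ``since $w(g)\ge 0$, this sum vanishes if and only if $\Mir(Q)=\varnothing$'' needs $w>0$ on $\Mir(Q)$ to give $(1)\Rightarrow(2)$; with $w\equiv 0$ and $Q$ commutative, $(1)\Rightarrow(2)$ fails. This is the same kind of hypothesis the paper itself imposes later, when it assumes $w(\mathrm{id})>0$ in its proposition on commutative quasigroups.

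Your side remark needs one correction. By the compatibility lemma, the transport law is $\Mir(h\cdot Q)=h^{\#}\,\Mir(Q)\,h^{-1}$, not $h\,\Mir(Q)\,h^{-1}$. The latter already fails for $Q=(\mathbb{Z}_3,+)$ and $h=(\alpha,\mathrm{id},\mathrm{id})$ with $\alpha$ a transposition, since $h\cdot Q$ is not commutative. The correct law still shows that $\Mir(Q)=\varnothing$ is a class invariant. However, the value $\sum_{g\in\Mir(Q)}w(g)$ can change within a class: for $w=\mathbf{1}_{\{\mathrm{id}\}}$ it equals $1$ at $Q$ and $0$ at $h\cdot Q$. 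So your global hypothesis, $w>0$ on all of $G$ rather than merely on $\Mir(Q)$, is the one that makes ``$k([Q])=0$'' a well-defined condition on classes.
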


\begin{proof}
By the previous proposition,
\[
k([Q])=\sum_{g\in\Mir(Q)} w(g).
\]
Since $w(g)\ge 0$ for all $g$, this sum vanishes
if and only if $\Mir(Q)=\varnothing$.
The equivalence with (3) follows from the definition of $\Mir(Q)$.
\end{proof}

\begin{definition}[Structurally chiral isotopy class]
An isotopy class $[Q]\in\mathcal{Q}(X)/G$ is called
\emph{structurally chiral}
(with respect to the chosen mirror parastrophe)
if $\Mir(Q)=\varnothing$.
\end{definition}

\begin{remark}
If $\Mir(Q)\neq\varnothing$, then $Q$ is isotopic to its mirror,
and the class $[Q]$ is dynamically unstable under symmetry-generated
mirror transitions.
Conversely, structural chirality means that mirror reversal
cannot be realized through intrinsic symmetry,
and therefore $k([Q])=0$ holds purely for algebraic reasons.
\end{remark}

\begin{proposition}[Commutative quasigroups are not structurally chiral]
If $Q=(X,\cdot)$ is commutative, then $Q^\#=Q$ and hence $\Mir(Q)\neq\varnothing$.
In particular, $[Q]$ is not structurally chiral.
If moreover $w(\mathrm{id})>0$ in Assumption~\ref{ass:symgen}, then $k([Q])>0$.
\end{proposition}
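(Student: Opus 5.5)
The plan is to verify the three claims in sequence, each reducing to an earlier result. \textbf{Step 1:} show $Q^\#=Q$. If $Q=(X,\cdot)$ is commutative, then the mirror operation satisfies $x\diamond y=y\cdot x=x\cdot y$ for all $x,y\in X$. The two binary operations therefore coincide on $X\times X$, and $Q^\#=Q$ as elements of $\mathcal{Q}(X)$.

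\textbf{Step 2:} exhibit an explicit mirror-isotopism. I would take the identity triple $\mathrm{id}:=(\mathrm{id}_X,\mathrm{id}_X,\mathrm{id}_X)\in G$ and check the defining condition $\beta(y)\cdot\alpha(x)=\gamma(x\cdot y)$. With this choice it reads $y\cdot x=x\cdot y$, which is exactly commutativity. Hence $\mathrm{id}\in\Mir(Q)$, so $\Mir(Q)\neq\varnothing$. By the definition of structurally chiral isotopy class, $[Q]$ is not structurally chiral.

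More generally, $\Mir(Q)=\Iso(Q,Q)=\Atp(Q)$, so $\Mir(Q)$ is a group containing the identity. Only the membership of $\mathrm{id}$ is needed, though.

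\textbf{Step 3:} show $k([Q])>0$. I would invoke the Proposition (Explicit formula for the mirror rate), which gives $k([Q])=\sum_{g\in\Mir(Q)}w(g)$. Every summand is nonnegative, and $\mathrm{id}\in\Mir(Q)$, so
\[
k([Q])\ \ge\ w(\mathrm{id})\ >\ 0 .
\]
Equivalently, the Structural vanishing criterion already gives $k([Q])\neq 0$, and $k\ge 0$ then yields positivity. The hypothesis $w(\mathrm{id})>0$ is needed here: the criterion alone only forces $k([Q])=0$ when $\Mir(Q)=\varnothing$. If $w$ vanished on all of $\Atp(Q)$, the rate could be $0$ even though $Q$ is not structurally chiral.

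None of these steps is a genuine obstacle; each is an unwinding of definitions. The one point needing care is to read ``$Q^\#=Q$'' as equality of operations on the fixed set $X$, not merely isotopy. This is what makes the identity triple, rather than some nontrivial isotopism, land in $\Mir(Q)$, and it lets the lower bound by the single term $w(\mathrm{id})$ be applied directly.
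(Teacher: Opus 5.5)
Your core argument is correct and is the one the statement is built around; the paper states this proposition without proof. Commutativity puts the identity triple in $\Mir(Q)$, and the explicit-formula proposition then yields $k([Q])\ge w(\mathrm{id})>0$.

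Delete the ``Equivalently'' sentence, though. For a general weight $w\ge 0$ the implication $\Mir(Q)\neq\varnothing\Rightarrow k([Q])\neq 0$ is false, because it needs $w>0$ somewhere on $\Mir(Q)$. Your own next sentence concedes this, so the vanishing criterion cannot replace the hypothesis $w(\mathrm{id})>0$.

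Be aware also that your bound is computed at the commutative representative itself. By the paper's compatibility lemma, $(h\cdot Q)^{\#}=h^{\#}\cdot Q^{\#}$, so in fact $\Mir(h\cdot Q)=h^{\#}\Mir(Q)h^{-1}$, not $h\,\Mir(Q)\,h^{-1}$. Hence $\sum_{g\in\Mir(\cdot)}w(g)$ need not be constant on an isotopy class. For example, take $Q=(\mathbb{Z}_3,+)$, $w=\mathbf{1}_{\{\mathrm{id}\}}$ and $h=(x\mapsto -x,\mathrm{id},\mathrm{id})$: the sum is $1$ at $Q$ but $0$ at the non-commutative isotope $h\cdot Q$. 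So the conclusion $k([Q])>0$ should be read at that representative.
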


\begin{example}[A structurally chiral quasigroup of order $7$]
Let $X=\{1,\dots,7\}$ and define a quasigroup operation $\cdot$ by the Cayley table
\[
\begin{array}{|ccccccc|} \hline
7 & 3 & 5 & 4 & 6 & 2 & 1 \\
4 & 2 & 6 & 3 & 7 & 1 & 5 \\
5 & 1 & 4 & 2 & 3 & 6 & 7 \\
1 & 5 & 2 & 6 & 4 & 7 & 3 \\
6 & 7 & 3 & 1 & 5 & 4 & 2 \\
3 & 6 & 1 & 7 & 2 & 5 & 4 \\
2 & 4 & 7 & 5 & 1 & 3 & 6 \\ \hline
\end{array}
\]
This quasigroup is not isotopic to its transpose (equivalently, not isotopic to $Q^\#$);
hence $\Mir(Q)=\varnothing$ and $[Q]$ is structurally chiral.
\end{example}

\begin{remark}[Why the example is not isotopic to its mirror]
The quasigroup in the above example is not isotopic to its mirror
(parastrophic transpose).
Indeed, consider the incidence structure of $2\times2$ subsquares
(intercalates) in the corresponding Latin square.
The multiset of intercalate counts per cell is invariant under isotopy,
since isotopy acts by independent permutations of rows, columns, and symbols.
In the given quasigroup, there exists a row containing two cells each belonging
to three distinct $2\times2$ subsquares, whereas no column has this property.
Under mirror parastrophy (transpose), this row--column asymmetry would have to
be preserved, which is impossible.
Hence the quasigroup is not isotopic to its mirror, and consequently
$\Mir(Q)=\varnothing$.
\end{remark}

\section{Conclusion}

In this paper we introduced a structural and dynamical framework
for chirality in quasigroups formulated at the level of isotopy classes.
By interpreting isotopy as a gauge symmetry and mirror parastrophy
as handedness reversal, we showed that a natural class of
gauge-invariant continuous-time dynamics reduces to a two-state
Markov process on each pair $\{[Q],[Q^{\#}]\}$.
The resulting evolution is governed by a single class-dependent
rate $k([Q])$, which determines whether the system dynamically
racemizes or remains chiral.

The conceptual motivation was illustrated in
Figure~\ref{fig:thalidomide_racemization_schematic},
where chiral interconversion appears as a symmetric two-state
process converging to racemic equilibrium.
In chemistry, rapid mirror interconversion shows that chirality
cannot always be regarded as a static property of isolated structures.
Our results demonstrate that an analogous phenomenon arises
abstractly: whenever mirror transitions are dynamically admissible
and isotopy-invariant, chirality is generically lost at the level
of isotopy classes.

The structural restriction introduced in the present work isolates the
precise algebraic obstruction to the loss of chirality.
Under symmetry-generated dynamics,
the vanishing condition $k([Q])=0$ is equivalent to the absence
of mirror-isotopisms, i.e.\ $\Mir(Q)=\varnothing$.
Thus dynamical chiral stability coincides with a purely structural
property of the quasigroup.

The concrete example of order $7$ exhibited in this paper
demonstrates that such structurally chiral isotopy classes
do indeed exist, showing that mirror irreducibility is not
pathological but a genuine algebraic phenomenon.

This framework therefore establishes a bridge between symmetry,
dynamical stability, and algebraic obstruction, and provides
a foundation for a systematic theory of structurally chiral
quasigroup classes.

$$ $$

\noindent Takao Inou\'{e}

\noindent Faculty of Informatics

\noindent Yamato University

\noindent Katayama-cho 2-5-1, Suita, Osaka, 564-0082, Japan

\noindent inoue.takao@yamato-u.ac.jp
 
\noindent (Personal) takaoapple@gmail.com (I prefer my personal mail)


\begin{thebibliography}{99}

\bibitem{ACS_MOTW_Thalidomide}
American Chemical Society,
\emph{Molecule of the Week: Thalidomide},
ACS Publications, 2014.

\bibitem{Belousov1967}
V.~D.~Belousov,
\emph{Foundations of the Theory of Quasigroups and Loops},
Nauka, Moscow, 1967.

\bibitem{Itoh2010Cereblon}
K.~Itoh, M.~Hanaoka, and T.~Higuchi,
\emph{Molecular mechanism of thalidomide teratogenicity via cereblon},
Proceedings of the National Academy of Sciences USA,
\textbf{107} (2010), 18365--18370.

\bibitem{Nishimura2011Review}
K.~Nishimura,
\emph{Thalidomide tragedy revisited: lessons from molecular mechanisms},
Congenital Anomalies,
\textbf{51} (2011), 1--7.

\bibitem{Norris1997Markov}
J.~R.~Norris,
\emph{Markov Chains},
Cambridge University Press, Cambridge, 1997.

\bibitem{Pflugfelder1990}
H.~O.~Pflugfelder,
\emph{Quasigroups and Loops: Introduction},
Heldermann Verlag, Berlin, 1990.

\bibitem{Vargesson2015Thalidomide}
N.~Vargesson,
\emph{Thalidomide-induced teratogenesis: history and mechanisms},
Birth Defects Research Part~C: Embryo Today,
\textbf{105} (2015), 140--156.

\bibitem{Yamamoto2022CRBNReview}
J.~Yamamoto, T.~Higuchi, and H.~Arimura,
\emph{Discovery of cereblon as a target of thalidomide and its derivatives},
Chemical Society Reviews,
\textbf{51} (2022), 4274--4304.

\end{thebibliography}
\end{document}